\newcommand{\norm}[1]{\|#1\|}
\newcommand{\R}{\mathbb{R}}
\newcommand{\Z}{\mathbb{Z}}
\newcommand{\N}{\mathbb{N}}
\newcommand{\mc}[1]{\mathcal{#1}}
\newcommand{\bx}{\bs{x}}
\newcommand{\bs}[1]{\boldsymbol{#1}}
\newcommand{\bsone}{\boldsymbol{1}}
\newcommand{\col}{\mathrm{col}}
\newtheorem{theorem}{Theorem}
\newtheorem{definition}{Definition}
\newtheorem{proposition}{Proposition}
\newtheorem{assumption}{Assumption}
\newacronym{NEP}{NEP}{ Nash equilibrium problem}
\newacronym{MI-NEP}{MI-NEP}{mixed-integer Nash equilibrium problem}
\newacronym{MI}{MI}{mixed-integer}
\newacronym{VI}{VI}{variational inequality}
\newacronym{ICRF}{ICRF}{integer-compatible regularization function}
\newacronym{BR}{BR}{best-response}
\newglossaryentry{MI-NE}
{
	name={MI-NE},
	description={mixed-integer Nash equilibrium},
	first={\glsentrydesc{MI-NE} (\glsentrytext{MI-NE})},
	plural={mixed-integer Nash equilibrium},
	descriptionplural={mixed-integer Nash equilibria},
	firstplural={\glsentrydescplural{MI-NE} (MI-NE)}
}
\title{\LARGE \bf
Proximal-like algorithms for equilibrium seeking\\ in mixed-integer Nash equilibrium problems
}
\author{Filippo Fabiani$^1$, Barbara Franci$^2$, Simone Sagratella$^3$, Martin Schmidt$^4$ and Mathias Staudigl$^2$
\thanks{$^1$ At the time of the work, he was with the Department of Engineering Science, University of Oxford, OX1 3PJ, United Kingdom.
	Currently, he is with the IMT School for Advanced Studies Lucca, Piazza San Francesco 19, 55100 Lucca, Italy ({\tt \footnotesize filippo.fabiani@imtlucca.it}).}
\thanks{$^2$ Department of Advanced Computing Sciences, Maastricht University, NL--6200 MD, Maastricht, The Netherlands {\tt \footnotesize (\{b.franci, m.staudigl\}@maastrichtuniversity.nl)}.}
\thanks{$^3$ Department of Computer, Control, and Management Engineering  Antonio Ruberti (DIAG), Sapienza University of Rome, 00185, Rome, Italy {\tt \footnotesize (sagratella@diag.uniroma1.it)}. }
\thanks{$^4$ Department of Mathematics at Trier University, Universitätsring 15
54296 Trier, Germany {\tt\footnotesize(martin.schmidt@uni-trier.de)}. }
\thanks{This work was partially supported through the Government’s modern industrial strategy by Innovate UK under Project LEO (Ref. 104781) and the COST Action CA16228, ``European Network for Game Theory''}%
}
\begin{document}

\maketitle
\thispagestyle{empty}
\pagestyle{empty}

\begin{abstract}
  We consider potential games with mixed-integer variables, for which
  we propose two distributed, proximal-like equilibrium seeking
  algorithms. Specifically, we focus on two scenarios: i) the
  underlying game is generalized ordinal and the agents update through
  iterations by choosing an exact optimal strategy; ii) the game
  admits an exact potential and the agents adopt approximated optimal
  responses. By exploiting the properties of integer-compatible
  regularization functions used as penalty terms, 
 we show that both
  algorithms converge to either an exact or an $\epsilon$-approximate
  equilibrium. We corroborate our findings on a numerical instance of
  a Cournot oligopoly model.
\end{abstract}

\section{Introduction}\label{sec:intro}

Tracing back from the seminal work by Monderer and Shapley
\cite{monderer1996potential}, potential games represent a broad class
of noncooperative games characterized by the existence of a
real-valued function, the \emph{potential function}, such that any
collective strategy profile minimizing the underlying function
coincides with a Nash equilibrium of the game. Potential games hence
provide a means to naturally model many control-theoretic applications
\cite{Mar09} such as routing \cite{Orda93}, complex social networks
\cite{Staudigl:2011} and Cournot competition \cite{ConKluKraw04}.

We consider generalized ordinal potential games in
which part of the decision variables of the agents are constrained to
assume integer values.
Such \gls{MI} games have been recently proposed as strategic models for the distributed coordination of autonomous vehicles \cite{fabiani2018mixed,fabiani2019multi}, transportation and traffic control \cite{cenedese2021highway}, and smart grids \cite{Vujanic:2016wr,cenedese2019charging}.
In addition, \gls{MI} restrictions are often encountered in market games \cite{Gabriel:2013us,Sag17} and combinatorial congestion games \cite{Rosenthal:1973wg,KleSch21} as well.
For this practically relevant, yet intrinsically nonconvex, \gls{MI} game-theoretic setting the existence of equilibria follows by assuming that a certain master problem admits a solution, and we hence present two distributed, proximal-like equilibrium seeking algorithms. In particular, we consider the following scenarios: i) the underlying game is generalized ordinal and the agents update their control variables iteratively by choosing an exact proximal \gls{BR} strategy; ii) the game admits an exact potential function but we allow agents to choose an inexact proximal \gls{BR} for their updates.

Similar to Bregman-versions of proximal algorithms \cite{DvuShtStaSurvey21}, we formulate the proximal best-response function using a class of norm-like regularizers, known in the \gls{MI} optimization community as \glspl{ICRF}. We choose \glspl{ICRF} as penalty terms in place of standard quadratic regularizations as we believe they could provide us with a mean to include continuous reformulations of the \gls{MI} optimization subproblems. We leave this topic for future research.
Thus, by exploiting the properties of the \glspl{ICRF}, acting as penalty terms in the individual agent's \gls{BR} problems, we prove that both proposed algorithms enjoy convergence guarantees to an equilibrium of the \gls{MI-NEP}. Specifically, in the first scenario considered the computed \gls{MI-NE} is exact, while in the second one the algorithm returns an approximate \gls{MI-NE}.
Since \glspl{MI-NEP} constitute a rather new class of strategic optimization problems, there are not many solution techniques available.
To the best of our knowledge, this work represents a first attempt
proposing proximal-like distributed algorithms for a \gls{MI} game
setting. The only alternative applicable algorithm for \glspl{MI-NEP}
is the Gauss--Southwell method designed in
\cite{sagratella2017algorithms}. Given the practical relevance of
\glspl{MI-NEP}, this is rather surprising, and completely diametric to
continuous \glspl{NEP}, for which a whole
arsenal of numerical solution techniques is available
\cite{dreves2011solution,mertikopoulos2017convergence}. In fact,
proximal \gls{BR}-based algorithms have been extensively studied in
both stochastic and deterministic Nash games
\cite{Scu14,Lei:2019vj}. All these schemes, however, leverage the
\gls{VI} reformulation of \glspl{NEP} \cite{facchinei2007generalized},
and thus require strong (or strict) monotonicity of the \gls{VI},
assumptions that cannot be structurally satisfied in
\glspl{MI-NEP}. The algorithms we develop, in particular our adaptive
update of the penalty parameter regulating the proximal \gls{BR}, are
inspired by the decomposition method proposed in
\cite{facchinei2011decomposition}.
As main contributions we show i) how a proximal-like \gls{BR}-scheme
with \glspl{ICRF} can be used to compute Nash equilibria satisfying
\gls{MI} restrictions, and ii) we show convergence to an approximate
equilibrium even under inexact computations of \gls{BR} strategies.

The rest of the paper is organized as follows. In \S
\ref{sec:problem_formulation} we introduce the problem addressed,
along with some preliminaries, while in \S \ref{sec:algo} we
present the two methods and related convergence analysis. Finally,
in \S \ref{sec:simulations}, we test our findings on a \gls{MI}
Cournot oligopoly model and we conclude in \S \ref{sec:conclusion}.


\section{Problem formulation and preliminaries}\label{sec:problem_formulation}

Let $\mc{I} \coloneqq \{1, \ldots, N\}$ be the set indexing the
$N$~agents taking part in the noncooperative game $\Gamma \coloneqq
(\mc{I}, (J_i)_{i \in \mc{I}}, (\mc{X}_i)_{i \in \mc{I}})$. Each agent
controls \gls{MI} variables $x_i$ belonging to a compact, nonempty set
$\mc{X}_i \subseteq \R^{n^{c}_{i}} \times \Z^{n^{d}_{i}}$, and aims at
minimizing a given cost function $J_i : \mc{X} \to \R$ with $\mc{X}
\coloneqq \prod_{i \in \mc{I}} \mc{X}_i \subseteq \R^n$ and $n
\coloneqq \sum_{i \in \mc{I}} n_i = \sum_{i \in \mc{I}}(n^{c}_{i} +
n^{d}_{i})$. The resulting \gls{MI-NEP} thus reads
\begin{equation}\label{eq:single_prob}
  \forall i \in \mc{I} :
	 \underset{x_i \in \mc{X}_i}{\textrm{min}} \ J_i (x_i,  \bs{x}_{-i}) \,,
\end{equation}
where $\bs{x}_{-i} \coloneqq \textrm{col}((x_{j})_{j \in \mc{I}\setminus\{i\}})$. 
Given the strategies of the other agents, $\bs{x}_{-i}$, the \gls{MI}
\gls{BR} of agent $i\in\mc{I}$ is defined as
\begin{equation}\label{eq:br_mapping}
  B_{i}(\bs{x}_{-i}) \coloneqq \underset{x_i \in \mc{X}_i}{\textrm{argmin}} \ J_i (x_i,  \bs{x}_{-i}) \,.
\end{equation}
%
Our goal is to design distributed algorithms,
able to drive the set of agents to an \gls{MI-NE} of the game $\Gamma$, according to the definition given next.

\begin{definition}\textup{(Mixed-integer $\epsilon$-Nash equilibrium)}\label{def:MINE}
	Given some $\epsilon \geq 0$, a strategy profile $\bs{x}^{\ast} \in \mc{X}$ is an $\epsilon$-approximate \gls{MI-NE} (or $\epsilon$-\gls{MI-NE}) of the game $\Gamma$ if, for all $i \in \mc{I}$,
	\begin{equation}
		J_i(x^\ast_i, \bs{x}^\ast_{-i}) \leq J_i(y_i, \bs{x}^\ast_{-i})+\epsilon \text{ for all } y_i \in \mc{X}_i \,.
	\end{equation}
	%
	If $\epsilon=0$, then we call $\bs{x}^{\ast}$ an exact \gls{MI-NE}.
	\hfill$\square$
\end{definition}

Definition~\ref{def:MINE} points out that a \gls{MI-NE} of the game (if it exists) is achieved when all the agents adopt a \gls{BR} strategy.

\subsection{Generalized ordinal and exact potential games}
Existence theorems for \glspl{NEP} typically require continuity of the agents' cost functions as well as compactness and convexity of the feasible sets \cite{facchinei2007generalized}. 
Since the \gls{NEP} in \eqref{eq:single_prob} is nonconvex, existence of Nash equilibria is, in principle, not guaranteed. We thus focus on the classes of \emph{exact} and \emph{generalized ordinal potential games}, for which existence of solutions can be guaranteed under certain assumptions. 

\begin{definition}\textup{(Potential game)}\label{def:pot_game}
	A game $\Gamma$ is called
	\begin{itemize}
		\item \emph{exact} potential game \cite{monderer1996potential } if there exists a continuous function $P:\R^n \to \R$ such that, for all $i \in \mc{I}$,
		$$
			P(x_i, \bs{x}_{-i}) - P(y_i, \bs{x}_{-i}) = J_i(x_i, \bs{x}_{-i}) - J_i(y_i, \bs{x}_{-i}) \,,
		$$
		for all $\bs{x}_{-i}$ and $x_i$, $y_i \in \mc{X}_i$;
		\item \emph{generalized ordinal} potential game \cite{facchinei2011decomposition} if, there exists a \emph{forcing function} $\phi : \R_{\geq 0} \to \R_{\geq 0}$ such that, for all $i \in \mc{I}$, $\bs{x}_{-i}$, and $x_i, y_i \in \mc{X}_i$,
		$$
		\begin{aligned}
			&J_i(y_i, \bs{x}_{-i}) - J_i(x_i, \bs{x}_{-i}) > 0 \text{ implies}\\
			&P(y_i, \bs{x}_{-i}) - P(x_i, \bs{x}_{-i}) \geq \phi(J_i(y_i, \bs{x}_{-i}) - J_i(x_i, \bs{x}_{-i})) \, ,
		\end{aligned}
		$$
		where $\lim_{t\to 0^{+}} \phi(t) = 0$. \hfill$\square$
	\end{itemize}
\end{definition}

Note that any exact potential game is a generalized ordinal potential game.
%
By exploiting the tight relation between first-order information of the potential function and the local cost functions of the agents, it is well-known that potential functions can be employed in the construction of a suitable master problem facilitating the computation of equilibria.
\begin{proposition}\hspace{-.2em}\textup{\cite[Th.~2]{sagratella2017algorithms}}
Let $P$ be a generalized ordinal potential function for the game $\Gamma$. Given some $\epsilon\geq 0$, any $\epsilon$-approximate solution of the optimization problem
\begin{equation}\label{eq:Master}
	\underset{\bs{x}\in\mc{X}}{\textrm{min}}\ P(\bs{x})
\end{equation}
yields an $\epsilon$-approximate \gls{MI-NE} of $\Gamma$.
\hfill$\square$
\end{proposition}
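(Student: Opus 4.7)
The plan is to unfold the defining $\epsilon$-approximate optimality of $\bs{x}^{\ast}$ in the master problem \eqref{eq:Master} along unilateral deviations of a single agent and then push the resulting inequality from $P$ to $J_i$ using the potential structure. Concretely, $\bs{x}^{\ast}\in\mc{X}$ being an $\epsilon$-approximate minimizer means $P(\bs{x}^{\ast})\le P(\bs{x})+\epsilon$ for every $\bs{x}\in\mc{X}$; specializing to $\bs{x}=(y_i,\bs{x}^{\ast}_{-i})$ with arbitrary $y_i\in\mc{X}_i$ gives the key one-coordinate inequality
\begin{equation*}
P(x^{\ast}_i,\bs{x}^{\ast}_{-i})-P(y_i,\bs{x}^{\ast}_{-i})\le\epsilon.
\end{equation*}

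I would then split the argument according to which of the two clauses of Definition~\ref{def:pot_game} is in force. In the exact potential case the transfer from $P$ to $J_i$ is an equality, so the displayed inequality instantly yields $J_i(x^{\ast}_i,\bs{x}^{\ast}_{-i})-J_i(y_i,\bs{x}^{\ast}_{-i})\le\epsilon$, which is exactly the $\epsilon$-Nash condition of Definition~\ref{def:MINE}. In the (strictly more general) generalized ordinal case I would proceed by contradiction: assume there is an agent $i$ and a deviation $y_i\in\mc{X}_i$ with $J_i(x^{\ast}_i,\bs{x}^{\ast}_{-i})-J_i(y_i,\bs{x}^{\ast}_{-i})>\epsilon\ge 0$; then, taking the roles of $x_i$ and $y_i$ reversed in the definition, the positive gap on $J_i$ triggers the forcing inequality
\begin{equation*}
P(x^{\ast}_i,\bs{x}^{\ast}_{-i})-P(y_i,\bs{x}^{\ast}_{-i})\ge\phi\!\left(J_i(x^{\ast}_i,\bs{x}^{\ast}_{-i})-J_i(y_i,\bs{x}^{\ast}_{-i})\right),
\end{equation*}
and this has to be incompatible with the one-coordinate bound above.

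For $\epsilon=0$ the contradiction is immediate because $\phi$ is a forcing function (positive on positive arguments, vanishing only at $0$), so the right-hand side is strictly positive while the left-hand side is non-positive. The genuinely delicate point — and the step I view as the main obstacle — is the case $\epsilon>0$ under a merely generalized ordinal potential: closing the contradiction requires the forcing function to satisfy $\phi(t)>\epsilon$ whenever $t>\epsilon$ (e.g.\ nondecreasing with $\phi(t)>0$ for $t>0$, as is standard for forcing functions in this literature). Under such a mild qualification on $\phi$ — which I would state explicitly when invoking \cite[Th.~2]{sagratella2017algorithms} — the contradiction closes and the claim follows for every $\epsilon\ge 0$.

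Finally, I would make two bookkeeping remarks. First, the argument only ever uses deviations in a single block $x_i$, so nothing beyond the one-player deviation property of a potential function is needed; in particular the mixed-integer structure of $\mc{X}_i$ plays no role in the reduction itself, it only enters through the feasibility of $y_i$. Second, since any exact potential game is generalized ordinal with $\phi(t)=t$, the two cases above are consistent, and the proposition recovers the familiar one-to-one correspondence between minimizers of $P$ and Nash equilibria in the exact potential setting.
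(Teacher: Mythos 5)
You have correctly sensed the one genuinely delicate point, but note first a structural fact: the paper contains no proof of this proposition at all---it is imported verbatim from \cite[Th.~2]{sagratella2017algorithms}---so the comparison is against the standard argument behind that citation, which is exactly the route you take: restrict the $\epsilon$-optimality of $\bs{x}^{\ast}$ in \eqref{eq:Master} to unilateral deviations $(y_i,\bs{x}^{\ast}_{-i})$ and transfer the resulting bound from $P$ to $J_i$, by identity in the exact case and via the forcing inequality, by contradiction, in the generalized ordinal case. Your exact-potential case and your $\epsilon=0$ case are correct, with one remark: the positivity $\phi(t)>0$ for $t>0$ that you invoke does not follow from the literal condition $\lim_{t\to 0^{+}}\phi(t)=0$ written in Definition~2 (under which $\phi\equiv 0$ would be admissible and the definition vacuous), but it does follow from the genuine forcing property $\phi(t_k)\to 0\Rightarrow t_k\to 0$, which is what the paper actually uses in the proof of Theorem~1; so your $\epsilon=0$ argument is sound under the intended reading.

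On the case $\epsilon>0$ with a merely generalized ordinal potential, your diagnosis is right and can be sharpened: as transcribed, the proposition is \emph{false} without a qualification on $\phi$. A one-player mixed-integer counterexample: $\mc{X}=\{0,1\}$, $J(x)=x$, $P(x)=x/2$, which is a generalized ordinal potential with forcing function $\phi(t)=t/2$; for $\epsilon=1/2$ the point $x^{\ast}=1$ satisfies $P(1)\le \min_{\mc{X}}P+\epsilon$, yet $J(1)=1>J(0)+\epsilon$, so it is not an $\epsilon$-MI-NE (only a $2\epsilon$-MI-NE). Hence your extra hypothesis ``$\phi(t)>\epsilon$ whenever $t>\epsilon$'' is genuinely needed, and results of this type in the literature either assume an exact potential or couple the two tolerances through the forcing function rather than using the same $\epsilon$ on both sides. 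The one error in your write-up is the parenthetical claim that this qualification follows from $\phi$ being ``nondecreasing with $\phi(t)>0$ for $t>0$'': it does not, because forcing functions may be bounded---$\phi(t)=\min\{t,\epsilon/2\}$ is nondecreasing, strictly positive on $(0,\infty)$, and forcing, yet $\phi(t)\le\epsilon/2<\epsilon$ for every $t$, so the contradiction never closes. What you need is precisely your primary formulation, $\phi(t)>\epsilon$ for all $t>\epsilon$; if one wants it for every $\epsilon\ge 0$ simultaneously, it amounts to $\phi(t)\ge t$, of which the exact-potential case $\phi(t)=t$ is the canonical instance---consistent with your closing bookkeeping remark.
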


\begin{assumption}\label{standing:existence}
  Problem \eqref{eq:Master} is solvable, i.e., there
  exists an $\bs{x}^{*}\in\mc{X}$ with $P(\bs{x})\geq P(\bs{x}^{*})$
  for all $\bs{x}\in\mc{X}$.
  \hfill$\square$
\end{assumption}

This assumption guarantees that the game~$\Gamma$ admits at least one Nash equilibrium in the nonconvex domain $\mc{X}$. Clearly, the solutions to the master problem \eqref{eq:Master} may not contain all possible Nash equilibria of the game $\Gamma$ \cite[Ex.~1]{sagratella2016computing}.

\subsection{Integer-compatible regularization functions}
Standard regularization techniques in \glspl{NEP} are based on the proximal \gls{BR} function obtained from \eqref{eq:br_mapping} by adding a quadratic penalty term.
Motivated by the proximal point interpretation of \gls{MI} optimization heuristics, we propose a regularization strategy of the individual agents' cost functions via \emph{integer-compatible regularization functions} (\glspl{ICRF}) \cite{boland2012new}. This family of functions has been introduced in the mixed-integer optimization community to control the duality gap \cite{Feizollahi2017}. They are also related to penalty methods for mixed-integer optimization \cite{lucidi2010exact}. 
\begin{definition}
\label{def:int_comp}
	A continuous function $\rho : \R^n \to \R$ is an \emph{integer-compatible regularization function} (\gls{ICRF}) if
	\begin{enumerate}
		\item[i)] $\rho(\bs{t}) \geq 0$ for all $\bs{t} \in \R^n$ and $\rho(\bs{t}) = 0 \iff \bs{t} = 0$;
		\item[ii)] for $\gamma \in (0,1)$, we have $\rho(\gamma \bs{t}) < \rho(\bs{t})$ for all $\bs{t} \neq 0$;
		\item[iii)] there exists a continuous and strictly increasing function $s : \R_{\geq 0} \to \R_{\geq 0}$ and some $\bar{K} \in \N$ such that, for all $K < \bar{K}$, $\rho(\bs{t}) \leq K \implies \|\bs{t}\|_1 \leq s^{-1}(K)$, where $\|\cdot\|_1$ denotes the $\ell_1$ norm in $\R^n$. \hfill$\square$
	\end{enumerate}
\end{definition}

Note that any norm defined in $\R^n$ is an \gls{ICRF}. A constructive
way to design \glspl{ICRF} is to consider decomposable penalties of
the form $\rho(\bs{t}) = \sum_{i=1}^{n} p(|t_i|)$, where $p : \R_{\geq
  0} \to \R_{\geq 0}$ is a concave and strictly increasing function. Prominent examples are
$p(t) = \log(t + \alpha) - \log(\alpha),\;p(t) = -(t + \alpha)^{-q} + \alpha^{-q},\;p(t)= 1 - e^{-\alpha t}$, or $p(t) = (1 + e^{-\alpha t})^{-1} - \tfrac{1}{2}$,
for some $\alpha$, $q > 0$ (see e.g. \cite{lucidi2010exact,boland2012new}). With these choices, $\rho(\cdot)$ amounts to an \gls{ICRF} \cite[Prop.~3.2]{boland2012new}. In particular, for the special case of binary constraints, a sensible formulation of an \gls{ICRF} is $\rho(\bs{t})=\sum_{i=1}^{n}\textrm{min}\{p(| t_{i}|),p(|1- t_{i}|)\}$.

\section{Proximal-like algorithms  for \glspl{MI-NEP}}\label{sec:algo}
We now propose two \gls{MI-NE} seeking algorithms. Algorithm \ref{alg:exact_ordinal} assumes that agents are able to compute an exact proximal \gls{BR} at each iteration. Algorithm \ref{alg:inexact_exact} relaxes this and instead allows for inexact BR computations. To show convergence, in the former case we rely on the fact that the \gls{MI-NEP} in \eqref{eq:single_prob} is generalized ordinal, whereas in the latter case we require the existence of an exact potential function.

Let $\rho_{i}$ denote the \gls{ICRF} employed by agent $i \in \mc{I}$
and let $\tau>0$ be a positive regularization parameter. We introduce
the \emph{proximal augmented local cost function} as a regularized
version of the local cost in \eqref{eq:single_prob}, which is given by
\begin{equation}\label{eq:aug_local}
	\tilde{J}_{i, \tau}(y_i, x_i; \bs{x}_{-i}) \coloneqq J_i(y_i, \bs{x}_{-i}) + \tau \rho_i(y_i - x_i)\,.
\end{equation}
In accordance, the \emph{proximal} \gls{BR} mapping in \eqref{eq:br_mapping} turns into
\begin{equation}\label{eq:perturbed_br_mapping}
	\beta_{i, \tau}(\bs{x}) \coloneqq \underset{y_i \in \mc{X}_i}{\textrm{argmin}} \ \tilde{J}_{i, \tau}(y_i, x_i; \bs{x}_{-i}) \,.
\end{equation}
Setting $\tau = 0$ allows us to recover the \gls{BR} mapping as
defined in \eqref{eq:br_mapping}, i.e., $\beta_{i, 0}(\bs{x}) =
B_i(\bs{x}_{-i})$. By considering these two new ingredients, in the
remainder of this section we design iterative and distributed schemes in
which the agents update their own action sequentially, according to
$\mc{I}$. We let $k\geq 0$ denote the iteration counter of the process
and $\bs{x}^{k}$ the iterate at the beginning of round $k+1$. For
an arbitrary agent $i\in\mc{I}$, we also define the (local) population state as
$$
\hat{\bs{x}}_i(k) \coloneqq \textrm{col}(x_1^{k+1}, \ldots,
x_{i-1}^{k+1}, x_i^{k}, x_{i+1}^{k},\ldots, x_N^{k}) \,.
$$
This corresponds to the collective vector of strategies at the $k$-th iteration communicated to agent $i$ when this agent has to perform an update, i.e., it computes the new strategy as a point in the \gls{MI} proximal \gls{BR} mapping,
$
x^{k+1}_{i}\in \beta_{i,\tau^{k}}(\hat{\bs{x}}_{i}(k)),
$
either exactly (Algorithm~\ref{alg:exact_ordinal}) or approximately
(Algorithm~\ref{alg:inexact_exact}). Successively, the next internal
state $\hat{\bs{x}}_{i+1}(k)$ is updated and passed to the ($i+1$)-th
agent. Throughout this process, note that, for all $k\geq 0$,
$\hat{\bs{x}}_{1}(k)=\bs{x}^{k}$ and $\hat{\bs{x}}_{N+1}(k)=\bs{x}^{k+1}$.


We stress that the proposed algorithms leverage the adaptive update of the regularization parameter $\tau$ in \eqref{eq:tau}, which produces a monotonically decreasing sequence $\{\tau^{k}\}_{k\geq 0}$, i.e., $\tau^{k+1}\leq \tau^{k}$ for all $k \geq 0$ \cite{facchinei2011decomposition}. Note that the rate of decrease strongly depends on $d_{\rho}(\bs{x}^{k+1},\bs{x}^{k})\coloneqq\textrm{max}_{i\in\mc{I}} \ \rho_{i}(x^{k+1}_{i}-x^{k}_{i}) \geq 0$, which measures the progress the method is making in the agents' proximal steps at the $k$-th iteration. As it will be clear from the convergence analysis, this quantity decreases over time, thus inducing a step towards an \gls{MI-NE}.

\subsection{Exact \gls{BR} computation in generalized ordinal potential games}
\label{subsec:exactcomp_genpot}
\begin{algorithm}[!t]
	\caption{Proximal-like method for \glspl{MI-NEP} with generalized ordinal potential and exact optimization}\label{alg:exact_ordinal}
	\DontPrintSemicolon
	\SetArgSty{}
	\SetKwFor{ForAll}{For all}{do}{End forall}\SetKwFor{While}{While}{do}{End while}
	\smallskip
	Set $k \coloneqq 0$, choose
        $\bs{x}^k\in \mc{X}$, $\tau^k > 0$ and $\omega \in (0,1)$\\
	\smallskip
	\While{$\bs{x}^k$ is not satisfying a stopping criterion}{
		\smallskip
		\ForAll {$i \in \mc{I}$}{
		\smallskip
		Obtain $\hat{\bs{x}}_{i}(k)$ and
		set  $x_i^{k+1} \in \beta_{i, \tau^k}(\hat{\bs{x}}_i(k))$\\
		\smallskip
		Update $\hat{\bs{x}}_{i+1}(k) = (x^{k+1}_{i}, \hat{\bs{x}}_{-i}(k))$
		}
		\smallskip
		Set $\bx^{k+1}=(x^{k+1}_{1},\ldots,x^{k+1}_{N})$\\
		\smallskip
		Update
		\begin{equation}\label{eq:tau}
                  \hspace{-0.15cm}\tau^{k+1}=\textrm{max}\left\{\omega\tau^{k},\textrm{min}\{\tau^{k},d_{\rho}(\bs{x}^{k+1},\bs{x}^{k})\}\right\}
		\end{equation}\\
		Set $k \coloneqq k+1$
		\smallskip
	}
\end{algorithm}

Next, by focusing on \glspl{MI-NEP} as in \eqref{eq:single_prob}, we  study the
convergence of Algorithm \ref{alg:exact_ordinal} under the following
assumption.

\begin{assumption}\label{ass:generalized_potential}
  The game $\Gamma = (\mc{I}, (J_i)_{i \in \mc{I}}, (\mc{X}_i)_{i \in
    \mc{I}})$ is a generalized ordinal potential game with potential
  function~$P(\cdot)$.
  \hfill$\square$
\end{assumption}

Thus, the Gauss--Seidel sequence of iterations in Algorithm~\ref{alg:exact_ordinal} has the following convergence property. We stress that, in our framework, an accumulation point for the sequence $\{\bs{x}^k\}_{k\geq 0}$ exists in view of Assumption~\ref{standing:existence}.
\begin{theorem}\label{th:exact_generalized}
	Under Assumption~\ref{ass:generalized_potential}, any accumulation point of the sequence of strategy profiles generated by Algorithm~\ref{alg:exact_ordinal}, $\{\bs{x}^k\}_{k\geq 0}$, is an \gls{MI-NE} of the game $\Gamma$ in \eqref{eq:single_prob}.
	\hfill$\square$
\end{theorem}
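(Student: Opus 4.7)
The overall plan is to exploit the monotone descent of the generalized ordinal potential $P$ along the iterate sequence and then pass to the limit in the proximal best-response optimality on a convergent subsequence. For each inner Gauss--Seidel update $\hat{\bs{x}}_i(k) \to \hat{\bs{x}}_{i+1}(k)$, optimality of $x_i^{k+1} \in \beta_{i,\tau^k}(\hat{\bs{x}}_i(k))$ applied with the candidate $y_i = x_i^k$ immediately yields $J_i(x_i^{k+1}, \hat{\bs{x}}_{-i}(k)) \leq J_i(x_i^k, \hat{\bs{x}}_{-i}(k)) - \tau^k \rho_i(x_i^{k+1} - x_i^k)$. Whenever $x_i^{k+1} \neq x_i^k$, positivity of $\rho_i$ (ICRF property (i)) makes this strict, and Assumption~\ref{ass:generalized_potential} then gives $P(\hat{\bs{x}}_i(k)) - P(\hat{\bs{x}}_{i+1}(k)) \geq \phi\bigl(\tau^k \rho_i(x_i^{k+1} - x_i^k)\bigr)$. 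Chaining the $N$ inner steps shows $\{P(\bs{x}^k)\}$ is nonincreasing; combined with compactness of $\mc{X}$ and continuity of $P$ it converges to some $P^\ast$, and the telescoping series forces each $\phi\bigl(\tau^k \rho_i(x_i^{k+1} - x_i^k)\bigr) \to 0$, hence $\tau^k \rho_i(x_i^{k+1} - x_i^k) \to 0$ by the forcing-function property of $\phi$.

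Next I would rule out $\tau^\ast \coloneqq \lim_k \tau^k > 0$. If $\tau^\ast > 0$, the previous step yields $\rho_i(x_i^{k+1}-x_i^k) \to 0$ and, via ICRF property (iii), $d_\rho(\bs{x}^{k+1},\bs{x}^k) \to 0$; then the update~\eqref{eq:tau} eventually enters the regime $d_\rho < \omega\tau^k$, forcing $\tau^{k+1} = \omega\tau^k$ and thus $\tau^k \to 0$, a contradiction. Hence $\tau^k \to 0$. For an accumulation point $\bar{\bs{x}}$ with $\bs{x}^{k_n} \to \bar{\bs{x}}$, compactness of $\mc{X}$ lets me extract a further subsequence (not relabeled) along which $\bs{x}^{k_n+1} \to \tilde{\bs{x}}$, so $\hat{\bs{x}}_i(k_n) \to (\tilde{x}_{1:i-1}, \bar{x}_{i:N})$ for each~$i$. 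Passing the proximal best-response inequality
\[
J_i(x_i^{k_n+1}, \hat{\bs{x}}_{-i}(k_n)) + \tau^{k_n} \rho_i(x_i^{k_n+1}-x_i^{k_n}) \leq J_i(y_i, \hat{\bs{x}}_{-i}(k_n)) + \tau^{k_n} \rho_i(y_i - x_i^{k_n})
\]
to the limit (the penalty terms vanish because $\tau^{k_n} \to 0$ and $\rho_i$ is bounded on the compact $\mc{X}$) yields $J_i(\tilde{x}_i, \hat{\bs{z}}_{-i}) \leq J_i(y_i, \hat{\bs{z}}_{-i})$ for every $y_i \in \mc{X}_i$, where $\hat{\bs{z}}_{-i} \coloneqq (\tilde{x}_{1:i-1}, \bar{x}_{i+1:N})$.

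Since the chain $P(\hat{\bs{x}}_i(k_n))$ is sandwiched between $P(\bs{x}^{k_n})$ and $P(\bs{x}^{k_n+1})$ and both endpoints converge to $P^\ast$, each intermediate limit carries the same potential value; the contrapositive of Assumption~\ref{ass:generalized_potential} then forces $J_i(\bar{x}_i, \hat{\bs{z}}_{-i}) = J_i(\tilde{x}_i, \hat{\bs{z}}_{-i})$, so $\bar{x}_i$ itself satisfies the best-response condition at $\hat{\bs{z}}_{-i}$. For $i = 1$ this is exactly the Nash condition at $\bar{\bs{x}}_{-1}$, since $\hat{\bs{z}}_{-1} = \bar{\bs{x}}_{-1}$. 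For $i \geq 2$, I would proceed by induction on $i$, leveraging the chain of equal-potential intermediate states to substitute $\tilde{x}_j$ for $\bar{x}_j$ (and vice versa) inside $\hat{\bs{z}}_{-i}$ and thereby recover the Nash condition at $\bar{\bs{x}}_{-i}$. The inductive swap is the main technical hurdle: because proximal best responses need not be unique, justifying the replacement of $\tilde{x}_j$ by $\bar{x}_j$ inside the mixed coordinate requires a careful combined use of the Gauss--Seidel ordering and the generalized ordinal property, whereas the descent and compactness steps upstream are essentially routine.
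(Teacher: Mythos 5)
Your opening two-thirds tracks the paper's proof closely: testing the proximal best response at the candidate $y_i = x_i^k$ to get the descent inequality $J_i(\hat{\bs{x}}_i(k)) - J_i(\hat{\bs{x}}_{i+1}(k)) \geq \tau^k \rho_i(x_i^{k+1}-x_i^k) \geq 0$, invoking the generalized ordinal property to make $\{P(\bs{x}^k)\}_{k\geq0}$ nonincreasing and convergent, and ruling out $\tau^k \not\to 0$ through the update rule \eqref{eq:tau} (eventually $d_\rho(\bs{x}^{k+1},\bs{x}^k) < \omega\tau^k$, forcing geometric decay) is exactly the paper's first part. One local slip: writing $P(\hat{\bs{x}}_i(k)) - P(\hat{\bs{x}}_{i+1}(k)) \geq \phi\bigl(\tau^k\rho_i(x_i^{k+1}-x_i^k)\bigr)$ implicitly uses monotonicity of the forcing function $\phi$, which Definition~\ref{def:pot_game} does not grant. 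The paper avoids this by keeping $\phi\bigl(J_i(\hat{\bs{x}}_i(k)) - J_i(\hat{\bs{x}}_{i+1}(k))\bigr)$, concluding $J_i(\hat{\bs{x}}_i(k)) - J_i(\hat{\bs{x}}_{i+1}(k)) \to 0$ from the forcing property, and only then dividing through the descent inequality; your conclusion $\tau^k\rho_i(x_i^{k+1}-x_i^k)\to0$ survives this reordering, so the slip is cosmetic.

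The genuine gap is your endgame. The paper closes the proof by asserting that $\lim_{k\to\infty}\|x_i^{k+1}-x_i^k\|_1 = 0$ for all $i\in\mc{I}$ (``by the same arguments as in the first part''), so that along the convergent subsequence every intermediate profile $\hat{\bs{x}}_i(k)$, and in particular $\hat{\bs{x}}_{-i}(k)$ and $x_i^{k+1}$, converges to the \emph{same} point $\bar{\bs{x}}$; passing the proximal best-response inequality to the limit then directly yields $J_i(\bar{\bs{x}}) \leq J_i(y_i,\bar{\bs{x}}_{-i})$ for every $i$, giving the contradiction. You instead allow $\bs{x}^{k_n+1}\to\tilde{\bs{x}}$ possibly distinct from $\bar{\bs{x}}$ and try to transfer the best-response property from the mixed profiles $(\tilde{x}_1,\ldots,\tilde{x}_{i-1},\bar{x}_i,\ldots,\bar{x}_N)$ back to $\bar{\bs{x}}$ by induction on $i$. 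That induction cannot be carried out with the tools you have: the equal-potential/contrapositive device you use controls only a change of agent $i$'s \emph{own} coordinate with all other coordinates held fixed, whereas your inductive swap must replace $\tilde{x}_j$ by $\bar{x}_j$ (for $j<i$) inside agent $i$'s cost $J_i$ --- and the generalized ordinal property says nothing about how $J_i$ varies with an opponent's coordinate $x_j$. Equality of potential values along the chain does not imply $J_i$ is unchanged under such a substitution, and best responses are not unique, so the replacement is simply unjustified; your argument proves the Nash condition only for $i=1$ (where the mixed profile is already $\bar{\bs{x}}_{-1}$) and flags, without resolving, the step that carries the theorem for $i\geq 2$. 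To be fair, you correctly sensed a real subtlety --- the first-part derivation of $\rho_i(x_i^{k+1}-x_i^k)\to0$ uses the contradiction hypothesis $\tau^k\geq\bar{\tau}>0$ and does not transfer verbatim once $\tau^k\to0$, a point the paper passes over quickly --- but the repair consistent with the paper's route is to establish $d_\rho(\bs{x}^{k+1},\bs{x}^k)\to0$ (hence $\tilde{\bs{x}}=\bar{\bs{x}}$) along the subsequence, not a coordinate-swapping induction, which as sketched does not constitute a proof.
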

\begin{proof}
We first show that, in case the sequence $\{\bs{x}^k\}_{k\geq 0}$ generated by Algorithm~\ref{alg:exact_ordinal} admits a limit point $\bar{\bs{x}} \in \mc{X}$, then the regularization parameter, adaptively updated via \eqref{eq:tau}, satisfies $\lim_{k\to\infty}\tau^{k}=0$ and there exists an infinite index set $\mc{K}$ such that $\tau^{k+1}<\tau^{k}$ for all $k \in \mc{K}$. Then, we prove that $\bar{\bs{x}} \in \mc{X}$ is actually an \gls{MI-NE} of the \gls{MI-NEP} in \eqref{eq:single_prob}.

By construction of the regularization parameter sequence defined by \eqref{eq:tau}, we have $\tau^{k+1}\leq\tau^{k}$ for all $k \geq 0$. Then, for the sake of contradiction, assume that there exists some $\bar{\tau} > 0$ such that $\tau^{k}\geq \bar{\tau}$ for all $k \geq 0$. In view of the updating rule at the $k$-th iteration, for all $i \in \mc{I}$ we have
\begin{equation}
	\label{eq:LB}
	\begin{split}
		J_i(\hat{\bs{x}}_{i}(k))-J_i(\hat{\bs{x}}_{i+1}(k)) &\geq \tau^{k} \rho_{i}(x_{i}^{k+1}-x_{i}^{k})\\
		& \geq \bar{\tau}\rho_{i}(x^{k+1}_{i}-x^{k}_{i})\geq0\,.
	\end{split}
\end{equation}
By exploiting the definition of a generalized ordinal potential game provided in Definition~\ref{def:pot_game}, we hence deduce
\begin{equation}
  \label{LB2}
  \begin{split}
    & P(\hat{\bs{x}}_{i}(k))-P(\hat{\bs{x}}_{i+1}(k)) \\
    \geq \ & \phi(J_i(\hat{\bs{x}}_{i}(k))-J_i(\hat{\bs{x}}_{i+1}(k)))\geq 0\,.
  \end{split}
\end{equation}
Therefore,
$
	P(\bs{x}^{k+1})=P(\hat{\bs{x}}_{N+1}(k)) \leq P(\hat{\bs{x}}_{N}(k))
	\leq\dotsb\leq P(\hat{\bs{x}}_{1}(k))=P(\bs{x}^{k}),
$
and hence the sequence $\{P(\bs{x}^{k})\}_{k \geq 0}$ is monotonically non-increasing. By the continuity of $P$, it follows that the full sequence $\{P(\bs{x}^{k})\}_{k \geq 0}$ is convergent to a finite value $\bar{P}$. Moreover, it follows from \eqref{LB2} that
$
\lim_{k\to\infty} \phi(J_i(\hat{\bs{x}}_{i}(k))-J_i(\hat{\bs{x}}_{i+1}(k)))=0 \,.
$
By definition of the forcing function, we also have
$$
\lim _{k\to\infty}(J_i(\hat{\bs{x}}_{i}(k))-J_i(\hat{\bs{x}}_{i+1}(k)))=0 \,.
$$
From \eqref{eq:LB}, we obtain $\lim_{k\to\infty}\rho_{i}(x^{k+1}_{i}-x^{k}_{i})=0$. Let $r^{k} \to 0$ be such that $\rho_{i}(x^{k+1}_{i}-x^{k}_{i})\leq r^{k}$ for all $k$ sufficiently large. By definition of an \gls{ICRF}, we deduce $\norm{x^{k+1}_{i}-x^{k}_{i}}_{1} \leq s_{i}^{-1}(r^{k})$. We recall that the function $s_{i}^{-1}(\cdot)$ is monotonically increasing, and therefore
$
\lim_{k\to\infty} \norm{x^{k+1}_{i}-x^{k}_{i}}_{1}=0.
$
Consequently, for all $k$ sufficiently large, it holds that
$
d_{\rho}(\bs{x}^{k+1},\bs{x}^{k})<\bar{\tau}.
$
From \eqref{eq:tau}, we have $\tau^{k+1}=\textrm{max}\{\omega\tau^{k},d_{\rho}(\bs{x}^{k+1},\bs{x}^{k})\}$, and hence we need $\tau^{k+1}=\omega\tau^{k}$ for all $k$. However, this implies that $\tau^{k}\to 0$ at a geometric rate, thus contradicting the hypothesis that $\tau^{k}\geq\bar{\tau}>0$ for all $k \geq 0$ and concluding the first part of the proof.

Now, let $\{\bs{x}^{k}\}_{k \geq 0}$ be a convergent subsequence with
accumulation point $\bar{\bs{x}}\in\mc{X}$. The existence of such a
convergent subsequence is guaranteed by the compactness of
$\mc{X}$.
By invoking the same arguments as in the first part of the proof,
we obtain $\lim
_{k\to\infty}(J_i(\hat{\bs{x}}_{i}(k))-J_i(\hat{\bs{x}}_{i+1}(k)))=0$
and $\lim_{k\to\infty} \norm{x^{k+1}_{i}-x^{k}_{i}}_{1}=0$ for all
$i\in\mc{I}$. Next, we show by contradiction that the accumulation
point $\bar{\bs{x}} \in \mc{X}$ coincides with an \gls{MI-NE} of
\eqref{eq:single_prob} with generalized ordinal potential. To this
end, let us suppose that there exists an agent $i \in \mc{I}$ that can
further minimize its cost function, i.e., for some $y_{i} \in \mc{X}_{i}$,
$
J_i(y_{i}, \bar{\bs{x}}_{-i})<J_i(\bar{\bs{x}})
$
holds.
By relying on the update rule in Algorithm~\ref{alg:exact_ordinal}, we obtain
\begin{align*}
  & J_i(x_{i}^{k+1}, \bs{x}_{-i}^{k})+\tau^{k} \rho_{i}(x_{i}^{k+1}-x_{i}^{k})\\
  \leq \ & J_i(y_{i}, \bs{x}_{-i}^{k})+\tau^{k} \rho_{i}(y_{i}-x_{i}^{k})\,.
\end{align*}
Now, passing to the limit and using the fact that $\tau^{k} \rightarrow 0$,
	$$
	J_i(\bar{\bs{x}}) \leq J_i(y_{i}, \bar{\bs{x}}_{-i})<J_i(\bar{\bs{x}})\,,
	$$
	which denotes a contradiction, thus concluding the proof.
\end{proof}

In contrast to the continuous case where a constant penalty parameter
can be used (see, e.g., \cite[Th.~4.3]{facchinei2011decomposition}),
in a \gls{MI} setting a vanishing regularization parameter is
required. In addition, note that Algorithm~\ref{alg:exact_ordinal}
assumes that agents can compute an exact \gls{BR} of
the proximal augmented local cost function in
\eqref{eq:aug_local}. This requires that, at every single iteration,
each agent solves a \gls{MI} nonlinear optimization
problem to optimality. Without additional structural assumptions (e.g., individual convexity), this could render the method
inefficient in practice. This reason motivates us to
investigate a variant of Algorithm~\ref{alg:exact_ordinal} involving
inexact computation of a point lying in the perturbed \gls{MI}
\gls{BR} mapping \eqref{eq:perturbed_br_mapping} of each agent.

\subsection{Inexact \gls{BR} computation in exact potential games}
\label{subsec:exactpot_inexactcomp}

\begin{algorithm}[!t]
	\caption{Proximal-like method for \glspl{MI-NEP} with exact potential and inexact optimization}\label{alg:inexact_exact}
	\DontPrintSemicolon
	\SetArgSty{}
	\SetKwFor{ForAll}{For all}{do}{End forall}\SetKwFor{While}{While}{do}{End while}
	\SetKwFor{uIf}{If}{then}{}\SetKwFor{Else}{Else}{}{End}
	\smallskip
	Set $k \coloneqq 0$, choose $\bs{x}^k
        \in \bs{\mc{X}}$, $\tau^k > 0$, $\omega \in (0,1)$ and error
        tolerance sequence $\{\delta^k\}_{k\geq 0}$\\
	\smallskip
	\While{$\bs{x}^k$ is not satisfying a stopping criterion}{
		\smallskip
		\ForAll {$i \in \mc{I}$}{
			\smallskip
			Obtain $\hat{\bs{x}}_{i}(k)$ \\
			\smallskip
			\uIf{$x_i^{k} \in\hat{\beta}_{i,\tau^{k}}(\hat{\bs{x}}_{i}(k);\delta^{k})$}{
			\smallskip
			Set $x_i^{k+1} = x_i^{k}$ \\
			\smallskip
			}
			\Else{}{
			\smallskip
			Set $x_i^{k+1} \in\hat{\beta}_{i,\tau^{k}}(\hat{\bs{x}}_{i}(k);\delta^{k})$ \\
			\smallskip
			}
			\smallskip
			Update $\hat{\bs{x}}_{i+1}(k) = (x^{k+1}_{i}, \hat{\bs{x}}_{-i}(k))$\\
			\smallskip
		}
		\smallskip
		Set $\bx^{k+1}=(x^{k+1}_{1},\ldots,x^{k+1}_{N})$\\
		\smallskip
		Update $\tau^{k+1}$ as in \eqref{eq:tau}
		and set $k \coloneqq k+1$
		\smallskip
	}
\end{algorithm}

We now consider the case in which the \gls{MI-NEP} in \eqref{eq:single_prob} admits an exact potential function. In this case, we can prove convergence even if the agents implement only \emph{approximate} \gls{BR}s at every iteration, according to the following definition:

\begin{definition}($\delta$-proximal \gls{BR})
	Given any $\bs{x} \in \mc{X}$ and tolerance $\delta\geq 0$,
        $y_i \in \mc{X}_i$ is an $\delta$-optimal response to
        $\bs{x}_{-i}$ if
	$$\tilde{J}_{i, \tau}(y_i, x_i; \bs{x}_{-i}) \leq \tilde{J}_{i, \tau}(z_i, x_i; \bs{x}_{-i}) + \delta 
	$$
        for all $z_i \in \mc{X}_i$.\hfill$\square$
\end{definition}

For each agent $i \in \mc{I}$, we define
$
\hat{\beta}_{i, \tau}(\bs{x}; \delta) \coloneqq \{y_i \in \mc{X}_i \mid \tilde{J}_{i, \tau}(y_i, x_i; \bs{x}_{-i}) \leq \tilde{J}_{i, \tau}(z_i, x_i; \bs{x}_{-i}) + \delta \text{ for all } z_i \in \mc{X}_i\}
$
as the set of $\delta$-optimal responses, given some collective vector of strategies $\bs{x}$. For the theoretical developments of this subsection, we then make the following assumption.

\begin{assumption}\label{ass:exact_potential}
  The game $\Gamma = (\mc{I}, (J_i)_{i \in \mc{I}}, (\mc{X}_i)_{i \in
    \mc{I}})$ is an exact potential game with potential function
  $P(\cdot)$. \hfill$\square$
\end{assumption}

Algorithm~\ref{alg:inexact_exact} summarizes the main steps of the
resulting distributed, Gauss--Seidel type sequence of iterations. For
the considered instance, after choosing an initial strategy $\bs{x}^0
\in \mc{X}$ and a sequence of penalty parameters $\{\tau^k\}_{k\geq
  0}$, the preliminary step requires to further define a certain error
tolerance in computing a $\delta$-optimal response. To this end, we
let $\{\delta^{k}\}_{k\geq 0}$ be a given sequence of positive numbers
such that $\delta^{k}\to \epsilon$ for some $\epsilon \geq 0$. In Algorithm~\ref{alg:inexact_exact} we
stick to the sequential update architecture, but instead of requiring
that agents pursue an exact \gls{BR}, we allow the updating agent to
choose an inexact \gls{BR},
$x^{k+1}_{i}\in \hat{\beta}_{i,\tau^{k}}(\hat{\bs{x}}_{i}(k);\delta^{k})$, only in case the inexact \gls{BR} computed at the previous step, i.e., the one obtained by considering $\hat{\bs{x}}_{i}(k-1)$, $\tau^{k-1}$ and $\delta^{k-1}$, does not belong to $\hat{\beta}_{i,\tau^{k}}(\hat{\bs{x}}_{i}(k);\delta^{k})$. The
updated strategy is then sent to the agent down the line, and
the procedure repeats as long as some stopping criterion is not met.

\begin{theorem}\label{th:inexact_exact}
	Let Assumption~\ref{ass:exact_potential} holds true. Consider an error sequence $\{\delta^{k}\}_{k\geq 0}$ satisfying $\delta^k \to \epsilon$ with $\epsilon \geq 0$. Then any accumulation point of the sequence $\{\bs{x}^{k}\}_{k\geq 0}$ generated by Algorithm~\ref{alg:inexact_exact} is an $\epsilon$-\gls{MI-NE} of the game $\Gamma$ in \eqref{eq:single_prob}.
	\hfill$\square$
\end{theorem}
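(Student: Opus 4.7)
My plan is to adapt the two-phase argument used for Theorem~\ref{th:exact_generalized}, leveraging the exact potential property (Assumption~\ref{ass:exact_potential}) in place of the forcing-function mechanism and carefully tracking the inexactness $\delta^k$. As a first step, I would unify the ``If'' and ``Else'' branches of Algorithm~\ref{alg:inexact_exact} by observing that in both cases the updated $x_i^{k+1}$ lies in $\hat{\beta}_{i,\tau^k}(\hat{\bs{x}}_i(k);\delta^k)$: indeed, in the ``If'' branch $x_i^{k+1}=x_i^k$ belongs to this set by hypothesis (and $\rho_i(x_i^{k+1}-x_i^k)=0$), while in the ``Else'' branch the inclusion holds by construction. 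Hence, for every $y_i\in\mc{X}_i$,
\[
J_i(x_i^{k+1},\hat{\bs{x}}_{-i}(k)) + \tau^k\rho_i(x_i^{k+1}-x_i^k) \leq J_i(y_i,\hat{\bs{x}}_{-i}(k)) + \tau^k\rho_i(y_i-x_i^k) + \delta^k.
\]
Specializing to $y_i=x_i^k$ and using the exact potential property to convert per-agent $J_i$-differences into $P$-differences yields $P(\hat{\bs{x}}_{i+1}(k)) + \tau^k\rho_i(x_i^{k+1}-x_i^k) \leq P(\hat{\bs{x}}_i(k)) + \delta^k$, which telescoped across $i\in\mc{I}$ produces the master descent
\[
P(\bs{x}^{k+1}) + \tau^k\sum_{i\in\mc{I}}\rho_i(x_i^{k+1}-x_i^k) \leq P(\bs{x}^k) + N\delta^k.
\]

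Next, I would prove that $\tau^k\to 0$ by a contradiction analogous to Theorem~\ref{th:exact_generalized}. Assuming $\tau^k\geq\bar{\tau}>0$ for all $k$, the update rule~\eqref{eq:tau} forces $\tau^{k+1}=\omega\tau^k$ whenever $d_\rho(\bs{x}^{k+1},\bs{x}^k) < \omega\tau^k$; if this happened infinitely often, geometric decay would contradict the lower bound, so $d_\rho(\bs{x}^{k+1},\bs{x}^k)\geq\omega\bar{\tau}$ for all sufficiently large $k$. Combined with the master descent and the bound $\sum_{i}\rho_i\geq d_\rho$, this gives $P(\bs{x}^{k+1}) - P(\bs{x}^k) \leq -\omega\bar{\tau}^2 + N\delta^k$ for $k$ large, and telescoping this against boundedness of $P$ on the compact set $\mc{X}$ delivers the contradiction. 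Having secured $\tau^k\to 0$, the same master descent also forces $d_\rho(\bs{x}^{k+1},\bs{x}^k)\to 0$, and property iii) of the \gls{ICRF} in Definition~\ref{def:int_comp} then gives $\|x_i^{k+1}-x_i^k\|_1\to 0$ for every $i\in\mc{I}$. Along any convergent subsequence $\bs{x}^{k_n}\to\bar{\bs{x}}$, this implies $\hat{\bs{x}}_{-i}(k_n)\to\bar{\bs{x}}_{-i}$ and $x_i^{k_n+1}\to\bar{x}_i$, so passing to the limit in the $\delta^k$-optimality inequality with an arbitrary $y_i\in\mc{X}_i$, using continuity of $J_i$ and $\rho_i$ together with $\tau^{k_n}\to 0$ and $\delta^{k_n}\to\epsilon$, yields $J_i(\bar{x}_i,\bar{\bs{x}}_{-i}) \leq J_i(y_i,\bar{\bs{x}}_{-i}) + \epsilon$ for each $i\in\mc{I}$ and $y_i\in\mc{X}_i$, which is precisely the $\epsilon$-\gls{MI-NE} condition of Definition~\ref{def:MINE}.

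The main technical obstacle will be the second step: unlike the forcing-function argument in Theorem~\ref{th:exact_generalized}, which was pollution-free, the aggregate descent here carries a persistent correction $N\delta^k$ and extracting $\tau^k\to 0$ demands balancing $\omega\bar{\tau}^2$ against a possibly non-summable sequence $\{\delta^k\}$. The exact-potential identity is exactly what removes the forcing-function overhead of Theorem~\ref{th:exact_generalized} and makes the decrease of $P$ per inner iteration quantitatively comparable to $\tau^k\rho_i$, enabling a clean contradiction once $\bar{\tau}$ is pitted against the asymptotic error $\epsilon$. A secondary delicate aspect is ensuring that, along the subsequence $\{k_n\}$ realizing convergence to $\bar{\bs{x}}$, the intermediate vectors $\hat{\bs{x}}_{-i}(k_n)$ and the updates $x_i^{k_n+1}$ genuinely converge to $\bar{\bs{x}}_{-i}$ and $\bar{x}_i$ respectively, for which the \gls{ICRF}-driven bound $\|x_i^{k+1}-x_i^k\|_1\to 0$ is crucial.
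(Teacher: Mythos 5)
Your endgame (passing to the limit in the $\delta^k$-optimality inequality to reach the $\epsilon$-equilibrium contradiction) matches the paper, but the descent phase has a genuine gap that breaks the proof whenever $\epsilon>0$. By unifying the If and Else branches you retain only the polluted inequality, and your master descent
\begin{equation*}
P(\bs{x}^{k+1})+\tau^{k}\sum_{i\in\mc{I}}\rho_{i}(x_{i}^{k+1}-x_{i}^{k})\leq P(\bs{x}^{k})+N\delta^{k}
\end{equation*}
is too weak: since $\delta^{k}\to\epsilon$, the term $N\delta^{k}$ neither vanishes nor is summable. Concretely, in your contradiction argument $\bar{\tau}$ is not at your disposal --- it is the infimum of the non-increasing sequence $\{\tau^{k}\}$ under the hypothesis $\tau^{k}\not\to 0$, and it may well satisfy $\omega\bar{\tau}^{2}\leq N\epsilon$, in which case your per-step estimate $P(\bs{x}^{k+1})-P(\bs{x}^{k})\leq-\omega\bar{\tau}^{2}+N\delta^{k}$ is eventually nonnegative and telescoping yields no contradiction; your remark that the exact-potential identity enables ``a clean contradiction once $\bar{\tau}$ is pitted against $\epsilon$'' has no argument behind it. The subsequent steps collapse as well: once $\tau^{k}\to0$, the surviving bound $\tau^{k}\sum_{i}\rho_{i}(x_i^{k+1}-x_i^k)\leq P(\bs{x}^{k})-P(\bs{x}^{k+1})+N\delta^{k}$ has a vanishing factor on the left and a non-vanishing $N\delta^{k}$ on the right, so it cannot force $d_{\rho}(\bs{x}^{k+1},\bs{x}^{k})\to0$, and the convergence of the intermediate profiles $\hat{\bs{x}}_{i}(k_{n})$ --- which you yourself flag as crucial for the final limit --- is left unsupported. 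As written, your plan is sound only in the special case $\epsilon=0$.

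The missing idea is that the If/Else gate in Algorithm~\ref{alg:inexact_exact} is not cosmetic: it is exactly what produces a $\delta$-free descent, and this is the paper's route. In the Else branch, $x_{i}^{k}\notin\hat{\beta}_{i,\tau^{k}}(\hat{\bs{x}}_{i}(k);\delta^{k})$ means
\begin{equation*}
J_{i}(\hat{\bs{x}}_{i}(k))=\tilde{J}_{i,\tau^{k}}(x_{i}^{k},x_{i}^{k};\hat{\bs{x}}_{-i}(k))>\min_{z_{i}\in\mc{X}_{i}}\tilde{J}_{i,\tau^{k}}(z_{i},x_{i}^{k};\hat{\bs{x}}_{-i}(k))+\delta^{k}\geq\tilde{J}_{i,\tau^{k}}(x_{i}^{k+1},x_{i}^{k};\hat{\bs{x}}_{-i}(k))\,,
\end{equation*}
i.e., the strict failure of $\delta^{k}$-optimality of $x_{i}^{k}$ exactly cancels the $\delta^{k}$ slack of $x_{i}^{k+1}$, giving $J_{i}(\hat{\bs{x}}_{i}(k))-J_{i}(\hat{\bs{x}}_{i+1}(k))\geq\tau^{k}\rho_{i}(x_{i}^{k+1}-x_{i}^{k})$ with no error term (in the If branch both sides are zero, using $\rho_i(0)=0$). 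Exactness of $P$ then makes $\{P(\bs{x}^{k})\}_{k\geq0}$ monotonically non-increasing and convergent (Assumption~\ref{standing:existence} bounds it below), the per-agent differences vanish along the whole sequence, and the contradiction machinery from the first part of Theorem~\ref{th:exact_generalized} applies verbatim to deliver $\tau^{k}\to0$ and $\|x_{i}^{k+1}-x_{i}^{k}\|_{1}\to0$ uniformly in $\epsilon\geq0$. The $\delta^{k}$-polluted inequality you derived is then invoked exactly once, in the closing limit against an arbitrary $y_i\in\mc{X}_i$, precisely as in your final step.
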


\begin{proof}
	The proof makes use of similar arguments as the one of Theorem~\ref{th:exact_generalized}. As a starting point, by definition of the update $x_i^{k+1}$, we have
	$$
	J_i(\hat{\bs{x}}_i(k))-J_i(\hat{\bs{x}}_{i+1}(k)) \geq \tau^{k} \rho_i(x_i^{k+1}-x_i^{k}) \,.
	$$
	Since in this case $P$ is an exact potential function, we have
	$$
	P(\hat{\bs{x}}_{i}(k))-P(\hat{\bs{x}}_{i+1}(k)) \geq \tau^{k} \rho_i(x_i^{k+1}-x_i^{k}) \geq 0 \,,
	$$
	where the last inequality uses the non-negativity of the \gls{ICRF}. By summing from $i=1$ to $N$, we hence obtain
	$$
	P(\bs{x}^{k})-P(\bs{x}^{k+1}) \geq 0 \,.
	$$
	In view of \cite[Lemma~3.4]{franci2022convergence}, it follows
        that
        $\lim_{k\to\infty}(P(\bs{x}^{k})-\textrm{min}_{\bs{x}\in\mc{X}}P(\bs{x}))$
        exists and is finite. Therefore, since $\mc{X} = \prod_{i \in
          \mc{I}} \mc{X}_i$ is compact, the sequence
        $\{\bs{x}^{k}\}_{k\geq 0}$ admits a convergent subsequence
        $\{\bs{x}^{k}\}_{k \in \mc{K}}$ with indices contained in some
        countable and infinite set $\mc{K}$, which has a limit point
        $\bar{\bs{x}} \in \mc{X}$. Therefore, in view of the
        continuity of the potential function, we have that $\lim _{k
          \rightarrow \infty}
        P(\bs{x}^{k})=P(\bar{\bs{x}})$ and $\lim _{k \rightarrow
          \infty}(P(\bs{x}^{k+1})-P(\bs{x}^{k}))=0$.
	Thus, for all $i \in \mc{I}$,
	$$
	\lim _{k \rightarrow \infty} (P(\hat{\bs{x}}_{i+1}(k))-P(\hat{\bs{x}}_i(k)))=0 \,.
	$$
	By definition of the potential function, it follows that
	$$
	\lim _{k \rightarrow \infty} (J_i(\hat{\bs{x}}_{i}(k))-J_i(\hat{\bs{x}}_{i+1}(k)))=0 \,.
	$$
	Again, by exploiting the property of the \gls{ICRF} in
        Definition~\ref{def:int_comp}.i), we obtain $\lim _{k
          \rightarrow \infty, k \in \mc{K}}\|x_i^{k+1}-x_i^{k}\|_1 =
        0$ for every $i \in \mc{I}$. It then follows
	$
	\lim _{k \rightarrow \infty} x_i^{k}=\bar{x}_i \text{ for all } i \in \mc{I}.
	$
Consequently, we deduce from the first part of the proof of Theorem \ref{th:exact_generalized} that also $\lim_{k\to\infty}\tau^{k}=0$.  We now claim that $\bar{\bs{x}} \in \mc{X}$ is an $\epsilon$-approximate Nash equilibrium and argue by contradiction. Suppose there exists an agent $i \in \mc{I}$ such that, for some $y_i \in \mc{X}_i$,
	$
	J_i(y_i, \bar{\bs{x}}_{-i}) + \epsilon <J_i(\bar{\bs{x}}).
	$
Resorting to the definition of the update mechanism yields
\begin{align*}
  & J_i(x_i^{k+1},\hat{\bs{x}}_{-i}(k)) + \tau^{k} \rho_i(x_i^{k+1}-x_i^{k})\\
  \leq \ & J_i(y_i,\hat{\bs{x}}_{-i}(k))+\tau^{k} \rho_i(y_i-x_i^{k})+\delta^{k} \,.
\end{align*}
	Then, passing to the limit and exploiting the fact that the regularization parameter tends to zero, i.e., $\lim _{k \rightarrow \infty} \tau^{k}=0$, and that $\delta^{k} \to \epsilon$, we finally obtain
	$$
	J_i(\bar{\bs{x}}) \leq J_i(y_i, \hat{\bs{x}}_{-i}) + \epsilon <J_i(\bar{\bs{x}}) \,.
	$$
	This represents a contradiction and concludes the proof.
\end{proof}


Note that Algorithm~\ref{alg:inexact_exact} also covers the case in
which the error sequence $\{\delta^k\}_{k \geq 0}$ is not forced by
the designer before implementing the procedure, but it naturally
arises from, e.g., a learning process. For example, consider a setting
in which the agents are endowed with typical learning procedures, such
as Gaussian processes or a neural network. As long as $\delta^{k} \to \epsilon$, Algorithm~\ref{alg:inexact_exact} returns an $\epsilon$-approximate \gls{MI-NE}. If the approximation error $\delta^{k}$ vanishes with the iteration index, instead, then Algorithm~\ref{alg:inexact_exact} produces a convergent sequence of strategy profiles to an exact \gls{MI-NE}. This assumption is not so stringent as it may seem. In the example considered above, i.e., agents endowed with learning procedures,  asymptotic consistency bounds on the approximation error can be exploited directly \cite{simonetto2021personalized}. 

\section{Numerical Experiments}\label{sec:simulations}
\begin{table}
  \caption{Simulation parameters}
  \label{tab:sim_val}
  \centering
  \begin{tabular}{llll}
    \toprule
    Symbol  & Unit & Description   & Value \\
    \midrule
    $p_i$ & \euro/good & Selling price & $\sim\mc{U}(10, 20)\times10^{3}$\\
    $m_i$ & \euro/good & Marginal cost & $\sim\mc{U}(7, 12)\times10^{3}$\\
    $u_i^d$ &  & Upper bound (discrete) & $\sim\mc{U}(200, 400)$\\
    $u_i^c$ &  & Upper bound (continuous)& $\sim\mc{U}(200, 400)$\\
    $\tau^0$ &  & Regularization parameter & $5000$\\
    $x_i^0$ &  & Initial goods vector & $\bs{0}_{100}$\\
    $\{\delta^k\}$ &  & Error tolerance sequence & $\tfrac{10^2 + (k^2 -1)\times10^{-6}}{k^2}$\\
    $\epsilon$ & & Approximation tolerance & $10^{-6}$\\
    \bottomrule
  \end{tabular}
\end{table}

We now test our theoretical findings on a numerical instance of a classic Cournot oligopoly model \cite{ConKluKraw04,Gabriel:2013us,sagratella2016computing}.

Specifically, we consider a market in which $N = 20$ firms produce
$n_{i} = 100$ goods each to maximize their
profits. Here, the first $50$ products, $x_i^d$, are
indivisible, while the other $50$, $x_i^c$, are modeled with
continuous variables and hence $x_i = \col(x_i^d, x_i^c)$. Thus, each
firm~$i \in \mc{I}$ aims at solving the following \gls{MI} quadratic
program
\begin{equation}\label{eq:cournot}
   \forall i \in \mc{I} : \left\{
  \begin{aligned}
    &\underset{x_i}{\textrm{min}} && (m_i - p_i)^\top x_i + (\textstyle\sum_{j \in \mc{I}} C_{i,j} x_j)^\top x_i\\
    &\textrm{ s.t. } && x^d_i \in \{0, u^d_i\}^{50}, \ x^c_i \in [0, u^c_i]^{50},
  \end{aligned}
\right.
\end{equation}
where the main parameters are described in
Table~\ref{tab:sim_val}. Also, we define matrices $C_{i,j} \in \R^{100
  \times 100}$, typically related with the costumers' inverse demand,
according to the procedure described in \cite[\S
4]{sagratella2016computing} for all $j \geq i$, and then we impose
$C_{j,i} = C^\top_{i,j}$. This degree of symmetry
ensures the existence of a potential function for the \gls{MI-NEP} in
\eqref{eq:cournot}, see, e.g., \cite{cenedese2019charging}, which has the form
$$
P(\bs{x}) = \textstyle\sum_{i \in \mc{I}} \left( h_i(x_i)  + \textstyle\sum_{j \in \mc{I}, j < i} g(x_i,x_j) \right)
$$
with $h_i(x_i) \coloneqq (m_i - p_i)^\top x_i + x_i^\top C_{i,i} x_i$ and $g(x_i,x_j) \coloneqq x_j^\top C_{j,i} x_i$.
In case the $(l,q)$-entry of matrix $C_{i,j}$ is nonnegative, the $q$-th product of the firm $j$ is a \emph{substitute} for the $l$-th product of firm $i$. On the other hand, if that entry is negative then the $q$-th product of the firm $j$ is a \emph{complement} for the $l$-th product of firm $i$. This framework resembles the 2-groups partitionable class in \cite{sagratella2016computing}. 
As an \gls{ICRF} for each agent, we adopted a piecewise affine approximation of the function $\rho_i(t_i) = \sum_{p=1}^{100} (1 - e^{0.9 |t_{i,p}|})$ applied componentwise ($t_{i,p}$ denotes the $p$-th element of $t_i \coloneqq y_i - x_i$). 
This is in order to handle each problem in \eqref{eq:cournot} with the solver used.

\begin{figure*}[t]
	\begin{subfigure}{.7\columnwidth}
		\centering
		\includegraphics[width=\textwidth]{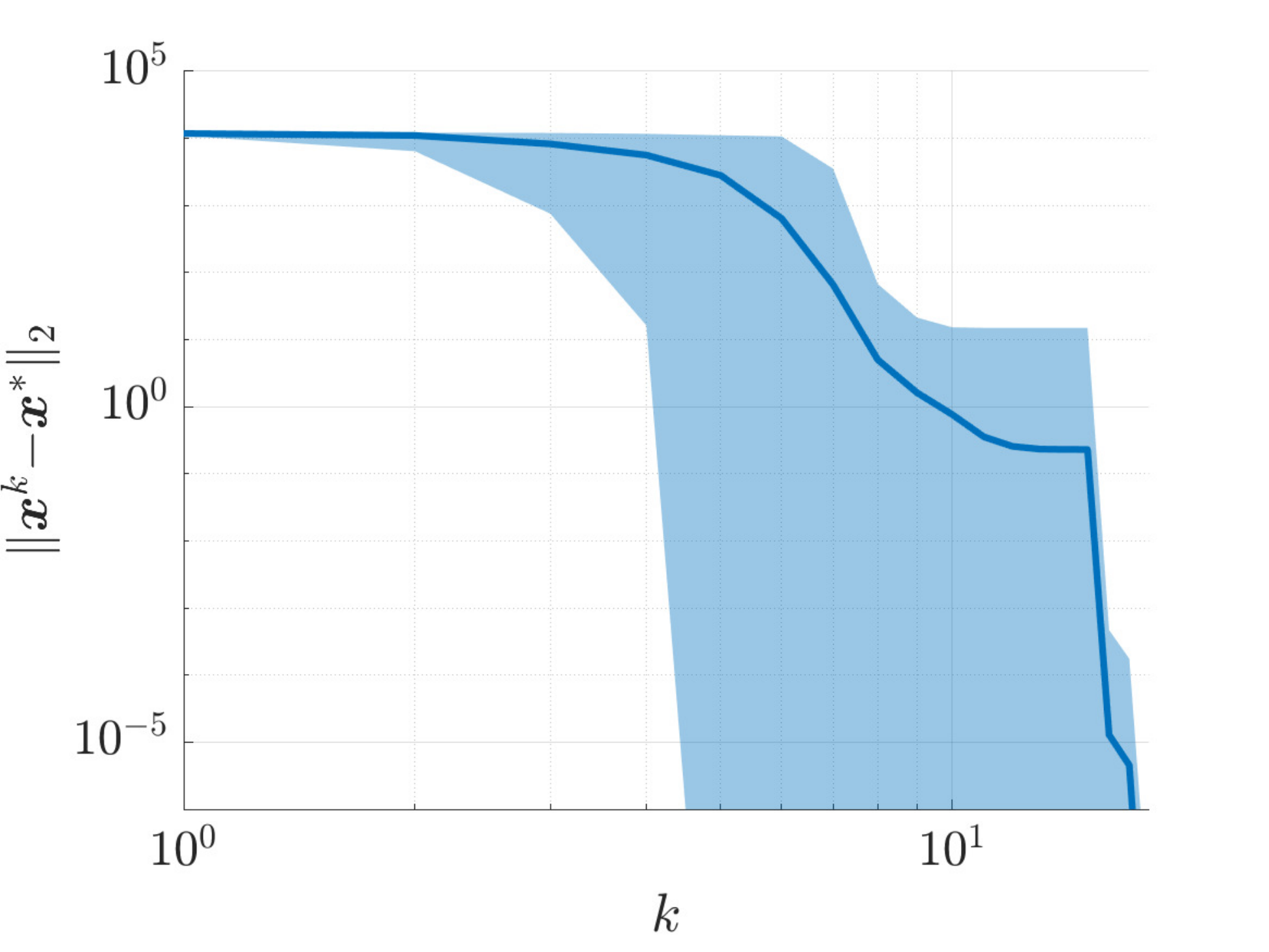}
		\caption{}
		\label{fig:MINE_distance}
	\end{subfigure}
	\begin{subfigure}{.7\columnwidth}
		\centering
		\includegraphics[width=\textwidth]{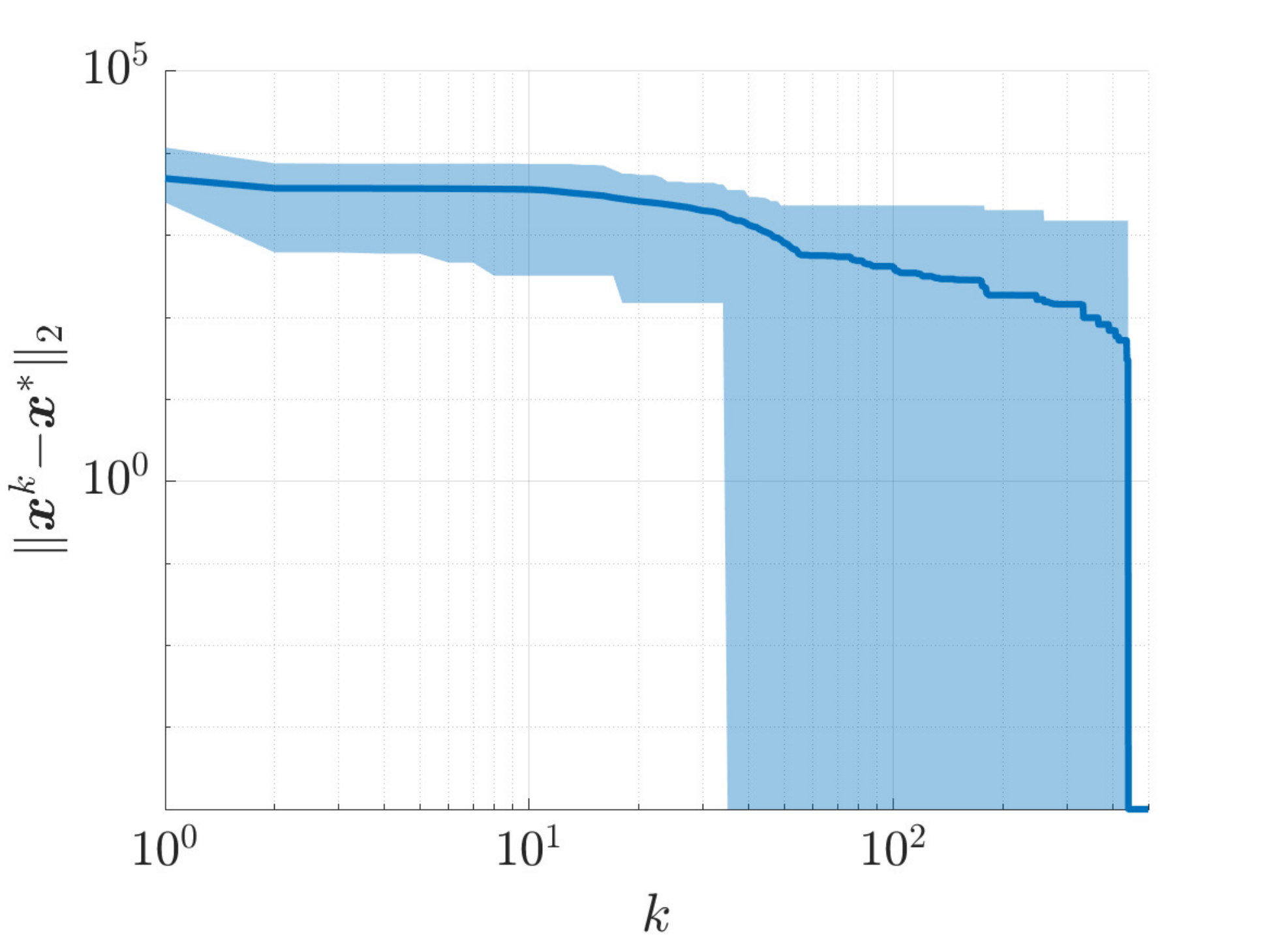}
		\caption{}
		\label{fig:MINE_distance_approx}
	\end{subfigure}
	\begin{subfigure}{.7\columnwidth}
		\centering
		\includegraphics[width=\textwidth]{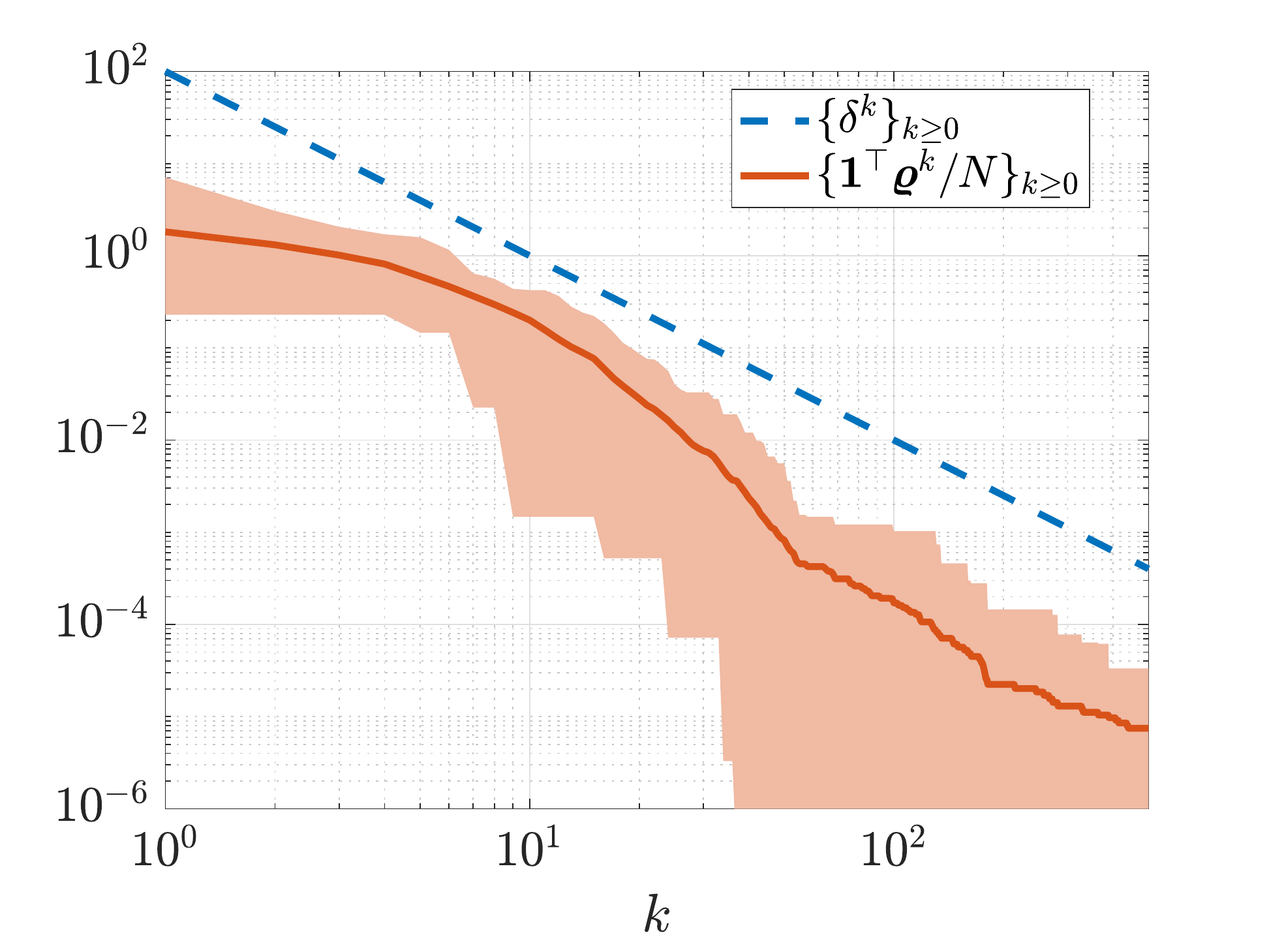}
		\caption{}
		\label{fig:approx_error}
	\end{subfigure}
	\caption{(a) Distance between the strategy profile $\bs{x}^k$ generated by Algorithm~\ref{alg:exact_ordinal} and an \gls{MI-NE}, averaged over the considered $50$ randomly generated instances of \eqref{eq:cournot}; (b) Distance between the strategy profile $\bs{x}^k$ generated by Algorithm~\ref{alg:inexact_exact} and an \gls{MI-NE} of the Cournot model in \eqref{eq:cournot}, averaged over the considered $50$ randomly generated instances. The sequence of strategy profiles $\{\bs{x}^k\}_{k \geq 0}$ converges to a $10^{-6}$-approximate \gls{MI-NE}; (c) Averaged evolution over $k$ of the approximation error measured by the solver in computing an inexact \gls{BR}, $\bs{\varrho}^k$ (red line) -- Algorithm~\ref{alg:inexact_exact}. This quantity is upper bounded by the error tolerance sequence $\{\delta^k\}_{k \geq 0}$ in Table~\ref{tab:sim_val} (blue line).}
	\label{fig:figures}
\end{figure*}

The numerical results reported in Figure~\ref{fig:figures} are obtained in Matlab by using Gurobi \cite{gurobi} as a solver on a laptop with a Quad-Core Intel Core i5 2.4\,GHz CPU and 8\,GB RAM. Specifically, we generate $50$ random instances of the considered \gls{MI-NEP} in \eqref{eq:cournot}, and test the behavior of Algorithm~\ref{alg:exact_ordinal} and \ref{alg:inexact_exact}, where each agent takes, on average, $0.0788$\,s to compute a \gls{BR} strategy.
While Fig.~\ref{fig:figures}(a) shows the averaged convergence behavior of the sequence of \gls{MI} strategy profiles generated by Algorithm~\ref{alg:exact_ordinal}, which actually converge in less than $30$ iterations,
Fig.~\ref{fig:figures}(b), instead, illustrates the averaged behavior of the sequence of sub-optimal \gls{MI} strategy profiles generated by Algorithm~\ref{alg:inexact_exact}, which converges to an $10^{-6}$-approximate \gls{MI-NE} of the Cournot model in \eqref{eq:cournot}. Note that the if-condition in the procedure generates a typical staircase behaviour and the convergence, in general, requires few more iterations. Finally, Fig.~\ref{fig:figures}(c) illustrates the averaged (over the agents) evolution of $\bs{\varrho}^k \coloneqq \col((\varrho_i^k)_{i \in \mc{I}})$, which corresponds to the approximation error actually made in computing an inexact \gls{BR}. Each $\varrho_i^k$, indeed, denotes the distance returned by the solver between the inexact and exact \gls{BR} solutions made by agent $i$ at the $k$-th iteration. The quantity $\bsone^\top \bs{\varrho}^k/N$ is always upper bounded by the error sequence $\{\delta^k\}_{k \geq 0}$ reported in Table~\ref{tab:sim_val}, as expected.

\section{Conclusion}
\label{sec:conclusion}

We have presented two proximal-like equilibrium seeking algorithms for \glspl{NEP} with mixed-integer variables admitting either generalized ordinal or exact potential functions. Exploiting the properties of integer-compatible regularization functions used as penalty terms in the agents' cost functions is key to prove convergence both in case the agents pursue an exact optimal strategy or an approximated one.

Future research directions include, but are not limited to, the extension of the proposed algorithms to generalized \glspl{MI-NEP} as well as developing their stochastic counterparts. There are also interesting computational questions to investigate. From the proximal-point interpretation of the feasibility pump, large penalty parameters enforce integer restrictions. 
One possibility to approach this within the proposed equilibrium seeking algorithms is to design a double loop procedure with large penalties at the beginning of the scheme.


\balance


\end{document}